\theoremstyle{plain}
\newtheorem{theorem}{Theorem}[section]
\newtheorem{corollary}[theorem]{Corollary}
\newtheorem{lemma}[theorem]{Lemma}
\newtheorem{prop}[theorem]{Proposition}
\theoremstyle{definition}
\newtheorem{remark}[theorem]{Remark}
\newtheorem{example}[theorem]{Example}
\newtheorem{problem}[theorem]{Problem}
\newcommand{\Aut}{\mathop{\mathrm{Aut}}}
\begin{document}
  \setcounter{Maxaffil}{3}
   \title{Forbidden subgraphs of power graphs}
   \author[a]{Pallabi Manna\thanks{mannapallabimath001@gmail.com}}
   \author[b]{Peter J. Cameron\thanks{pjc20@st-andrews.ac.uk}}
   \author[a]{Ranjit Mehatari\thanks{ranjitmehatari@gmail.com, mehatarir@nitrkl.ac.in}}
   \affil[a]{Department of Mathematics,}
   \affil[ ]{National Institute of Technology Rourkela,}
   \affil[ ]{Rourkela - 769008, India\\}
    \affil[ ]{ }
   \affil[b]{School of Mathematics and Statistics,}
   \affil[ ]{University of St Andrews,}
   \affil[ ]{North Haugh, St Andrews, Fife, KY16 9SS, UK}

   \maketitle
\begin{abstract}
The undirected power graph (or simply power graph) of a group $G$, denoted
by $P(G)$, is a graph whose vertices are the elements of the group $G$, in
which two vertices $u$ and $v$ are connected by an edge between if and only
if either $u=v^i$ or $v=u^j$ for some $i$, $j$.

A number of important graph classes, including perfect graphs, cographs,
chordal graphs, split graphs, and threshold graphs, can be defined either
structurally or in terms
of forbidden induced subgraphs. We examine each of these five classes and
attempt to determine for which groups $G$ the power graph $P(G)$ lies in the
class under consideration. We give complete results in the case of nilpotent
groups, and partial results in greater generality. In particular, the
power graph is always perfect; and we determine completely the groups whose
power graph is a threshold or split graph (the answer is the same for both
classes). We give a number of open problems.
\end{abstract}
{\bf AMS Subject Classification (2020): }05C25.\\
{\bf Keywords: } Power graph, induced subgraph, threshold graph, split graph,
chordal graph, cograph, perfect graph, nilpotent group, prime graph.

\section{Introduction}

The study of graph representations is one of the interesting and popular 
research topic in algebraic graph theory. One of the major graph
representation amongst them is the power graphs of finite groups. We found
several papers in this context
\cite{Abawajy,Cameron,Ghosh,Chakrabarty,Chattopadhyay,Curtin,dostabadi,Kelarev,Ma1,Ma,Pourghobadi}.
\begin{example}\label{order8}
There are five groups of order $8$. The power graph of $C_8$ is complete,
and the power graph of $(C_2)^3$ is a star $K_{1,7}$. The figure shows
the power graphs of the other three groups.
\begin{center}
\begin{tabular}{ccc}
\includegraphics[scale=0.7]{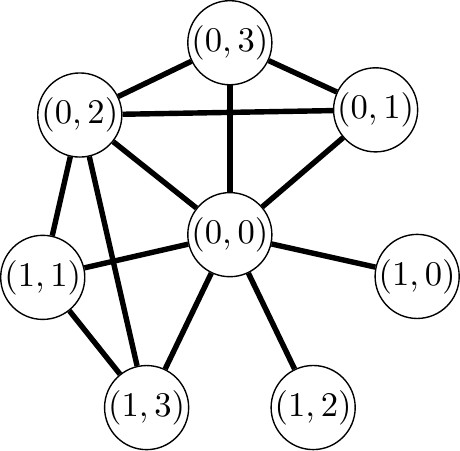} & \includegraphics[scale=1.0]{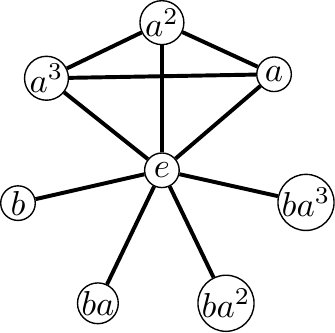}
 & \includegraphics[scale=1.0]{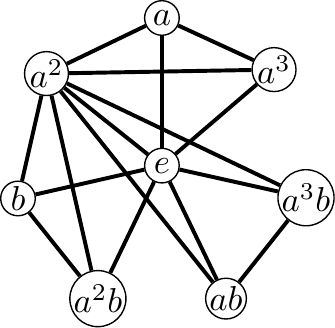} \\
$C_4\times C_2$ & $D_4$ & $Q_8$ \\
\end{tabular}
\end{center}
\end{example}

We begin with the history of the concept.
The notion of power graphs was first introduced by Kelarev and Quinn
\cite{Kelarev} in 2002.
For a semigroup $S$, the \emph{directed power graph} of $S$, denoted by
$ \vec{P}(S)$,  is a graph with vertex set $V=S$; for two distinct
vertices $u,v\in S$, there exists an arc $(u,v)$
if $v$ is a power of~$u$. 

The corresponding undirected graph is called  the
\emph{undirected power graph} of $S$, denoted by $P(S)$. The undirected
power graph of a semigroup was introduced by
Chakrabarty et~al.~\cite{Chakrabarty} in 2009. So the undirected power graph of
$S$ is the graph with vertex set $V(P(S))=S$,
with an edge between two vertices $u$ and $v$ if $ u\ne v$ and either $v$ is
a power of $u$ or $u$ is a power of~$v$. In the sequel, ``power graph'' will
mean ``undirected power graph''.

Later the power graphs of groups were studied. In \cite{Chakrabarty}, the
authors proved that if $G$ is a finite group then the power graph $P(G)$ is
always connected. They further showed that a finite group has a complete
undirected power graph if and only if it is cyclic and its order is equal to
$1$ or $p^{m}$ for some prime $p$. (We give the proof below.) They also counted
the number of edges in a power graph of a finite group $G$ by the formula
$|E(P(G))|=\dfrac{1}{2}[\sum_{g\in G}(2o(g)-\phi(o(g))-1)]$. The power graph
of a finite group is Eulerian if and only if the group $G$ is of odd order
\cite{Chakrabarty}. Also $P(G)$ is Hamiltonian if $|G|\geq 3$.

In \cite{Curtin}, Curtin \emph{et~al.}\ introduced the concept of proper power
graphs. The \emph{proper power graph} of a group $G$, denoted by $P^*(G)$, is
the graph obtained from $P(G)$ by deleting the identity. They discussed
the diameter of the proper  power graph of the symmetric group $S_n$ on
$n$ symbols. For more information related to proper power graphs we refer to
\cite{Curtin, dostabadi,Pourghobadi}.

In \cite{Chattopadhyay}, Chattopadhyay \emph{et~al.} gave bounds for the
vertex connectivity of the power graph of the cyclic group $C_n$.

In \cite{Ghosh}, Cameron and Ghosh
investigated isomorphism of the power graphs of groups. They showed that if
two finite Abelian groups have isomorphic power graphs then they are isomorphic.
Moreover they proved that if two finite groups have isomorphic directed power
graphs then they have same number of elements of each order. They have also
proved that the only finite group $G$ for which $\Aut(G)=\Aut(P(G))$ is the
Klein $4$-group $V_4=C_2\times C_2$. In \cite{Cameron}, Cameron proved
that if two finite groups have isomorphic power graphs then they have
isomorphic directed power graphs.

Papers dealing with the power graphs of infinite groups include
\cite{aacns,cgj,cj,jafari,z1,z2}.

For more information related to power
graphs we refer to the survey article by Abawajy \emph{et~al.}~\cite{Abawajy}.

\medskip

Let $G=(V,E)$  be any graph and $S$ be any subset of $V$.
Then the induced subgraph $G[S]$ is the graph whose vertex set is $S$ and
whose edge set consists of all of the edges in $E$ that have both endpoints
in $S$. For any graph $H$, the graph $G$ is said to be \emph{$H$-free} if it
has no induced subgraph isomorphic to $H$. The graphs $P_n$, $C_n$ and $2K_2$
denote the path on $n$ vertices, the cycle with $n$ vertices, and two disjoint
edges with no further edges connecting them.

We use $C_n$ for both the cyclic group of order $n$ and the cycle graph on
$n$ vertices; the context makes clear in each case which is intended. Also,
we use $D_n$ for the dihedral group
\[\langle a,b:a^n=b^2=(ab)^2=1\rangle\]
of order $2n$, rather than for the dihedral group of order $n$ as is often
done.

\medskip

Our general theme is that various important classes of graphs are defined
by forbidden induced subgraphs \cite{Brand,Trotignon}, and we investigate several of these classes
with the aim of determining for which groups $G$ the power graph $P(G)$
belongs to the corresponding class. So we conclude the Introduction with a
simple result of this form due to Chakrabarty
\emph{et~al.} \cite{Chakrabarty}, which illustrates the general
problem and will also be used later. Note that a graph is complete if and only
if it does not contain the null graph on $2$ vertices as an induced subgraph.

\begin{prop}\label{p:complete}
The power graph of a finite group $G$ is complete if and only if $G$ is a
cyclic group of prime power order.
\end{prop}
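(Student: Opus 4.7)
The proof naturally splits into the two implications, and the plan is to handle each using only elementary properties of cyclic groups.

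For the easier direction, suppose $G$ is cyclic of order $p^m$ for some prime $p$, generated by $g$. I would argue that any two non-identity elements are comparable under power-taking. Write any element as $g^a$; its order is $p^m/\gcd(a,p^m)$, and since $p^m$ is a prime power, $\gcd(a,p^m)$ is itself a power of $p$. Thus the cyclic subgroup $\langle g^a\rangle$ depends only on which power of $p$ that gcd is. Consequently the subgroups of $G$ form a chain under inclusion, so for any $u,v\in G$ we have $\langle u\rangle\subseteq\langle v\rangle$ or vice versa, which gives the required edge in $P(G)$. (The cases $m=0$ and where one vertex is the identity are trivial.)

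For the harder direction, assume $P(G)$ is complete. The plan has two steps: first show $G$ is cyclic, then show its order is a prime power. Pick an element $g\in G$ of maximum order, say $n$. For any $x\in G$, completeness gives that either $x$ is a power of $g$ (so $x\in\langle g\rangle$) or $g$ is a power of $x$; in the latter case $|x|\geq n$, so by maximality $|x|=n$ and $\langle x\rangle=\langle g\rangle$, giving again $x\in\langle g\rangle$. Hence $G=\langle g\rangle$ is cyclic.

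Now suppose for contradiction that $|G|=n$ has two distinct prime divisors $p$ and $q$. Since $G$ is cyclic, it contains an element $x$ of order $p$ and an element $y$ of order $q$. Neither can be a power of the other, because the order of a power of $x$ divides $p$ and the order of a power of $y$ divides $q$, while $\gcd(p,q)=1$ forces both orders to be $1$, contradicting $|x|=p$, $|y|=q$. So $x$ and $y$ are non-adjacent in $P(G)$, contradicting completeness. Thus $n$ is a prime power (including the trivial case $n=1$), completing the proof. The only real ``obstacle'' is recognising that cyclicity must be established before one can talk about the multiplicative structure of $|G|$; once $G$ is cyclic, the prime-power conclusion is immediate from the existence of coprime-order elements.
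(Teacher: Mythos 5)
Your proof is correct, but in the converse direction it takes a genuinely different route from the paper. The paper first deduces that $|G|$ is a prime power (using Cauchy's theorem to produce nonadjacent elements of orders $p$ and $q$ for distinct prime divisors $p,q$), and only then proves cyclicity by a counting argument: each order $p^i$ is realised by at most $\phi(p^i)$ elements, and since $\sum_{i\le n}\phi(p^i)=p^n$ these bounds are all attained, forcing an element of order $p^n$ to exist. You reverse the two steps: you first establish cyclicity directly by taking $g$ of maximum order and showing every $x$ lies in $\langle g\rangle$ (either $x$ is a power of $g$, or $g$ is a power of $x$ and maximality forces $\langle x\rangle=\langle g\rangle$), and only then rule out two prime divisors using elements of coprime prime orders inside the cyclic group. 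Your ordering buys some economy: it avoids both Cauchy's theorem (existence of prime-order elements is elementary once cyclicity is known) and the Euler-function count, and your intermediate step isolates the clean fact that any finite group with complete power graph is cyclic. The paper's counting argument is heavier but is a classical technique worth knowing; the forward direction is essentially identical in both treatments. One cosmetic remark: your sentence justifying that $x$ and $y$ are nonadjacent is slightly garbled (``forces both orders to be $1$''), but the intended divisibility argument --- if $y$ were a power of $x$ then $q=|y|$ would divide $p$, impossible for distinct primes --- is sound.
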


\begin{proof}
In a cyclic group $G$, $a$ is a power of $b$ if and only if the order of $a$
divides the order of $b$. If $|G|$ is a power of a prime $p$, then the
possible orders are powers of $p$, and so are totally ordered; so $P(G)$
is complete.

Conversely, suppose that $P(G)$ is complete. Then $|G|$ must be a prime power;
for, if distinct primes $p$ and $q$ divide $|G|$, then $G$ contains elements
$a$ and $b$ of orders $p$ and $q$ respectively, and these are nonadjacent in
$P(G)$.

Let $|G|=p^n$. Then, for each $i$ with $i\le n$, the number of elements 
of order $p^i$ is at most $\phi(p^i)$, where $\phi$ is Euler's
function, since if $a$ and $b$ are two such elements then
$b\in\langle a\rangle$, and a cyclic group of order $p^i$ contains $\phi(p^i)$
elements of order $p^i$. Since
\[\sum_{i\le n}\phi(p^i)=p^n,\]
we see that there are $\phi(p^i)$ elements of order $p^i$ for all $i$. In
particular, there exist elements of order $p^n$; so $G$ is cyclic.
\end{proof}

\section{The power graph is perfect}

A finite graph $\Gamma$ is \emph{perfect} if it has the property that every
induced subgraph has clique number equal to chromatic number. We are going
to show that every power graph is perfect. This motivates considering
subclasses of the class of perfect graphs, as we do in this paper.

The class of perfect graphs is closed under complementation (this is the
\emph{Weak Perfect Graph Theorem} of Lov\'asz~\cite{wpgt}), and contains
several further important graph classes, including bipartite graphs,
line graphs, chordal graphs, interval graphs, and comparability graphs of
partial orders. It was shown by Gr\"otschel, Lov\'asz and Schrijver \cite{gls}
that computational problems which are hard for general graphs (such as graph
colouring, maximal clique and maximal independent set) can be solved in
polynomial time for perfect graphs using semidefinite programming.

According to the \emph{Strong Perfect Graph Theorem}, a conjecture of 
Berge~\cite{berge} proved by Chudnovsky,
Robertson, Seymour and Thomas~\cite{spgt}, a graph is perfect if and only if
it contains no odd hole or odd antihole as induced subgraph, where an
\emph{odd hole} is an $n$-cycle with $n$ odd and $n\ge5$, and an
\emph{odd antihole} is the complement of an odd hole.

A \emph{partial preorder} is a binary relation $\to$ on a set $X$ which is
reflexive and transitive. Its \emph{comparability graph} is the graph
on the vertex set $X$ in which $u$ and $v$ are adjacent if and only if either
$u\to v$ or $v\to u$.

\begin{prop}\label{t:ppp}
The comparability graph of a partial preorder is perfect.
\end{prop}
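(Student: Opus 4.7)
The plan is to show that the comparability graph of any partial preorder $(X,\to)$ coincides with the comparability graph of some honest partial order on the same vertex set, thereby reducing the statement to the classical fact that comparability graphs of partial orders are perfect.

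First I would introduce the equivalence relation $u\sim v$ defined by $u\to v$ and $v\to u$; reflexivity and transitivity of $\to$ make $\sim$ a genuine equivalence relation. Fix once and for all an arbitrary total order on each equivalence class, and define a new relation $\leq$ on $X$ by declaring $u\leq v$ if and only if either $u=v$, or $u\to v$ with $u\not\sim v$, or $u\sim v$ with $u$ preceding $v$ in the fixed total order on their common class. Informally, $\leq$ linearly refines $\to$ inside each ``tie-cluster''.

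The next step is to verify that $(X,\leq)$ is a partial order whose comparability graph equals that of $(X,\to)$. Reflexivity and antisymmetry are immediate. For transitivity, suppose $u\leq v\leq w$; a short case analysis on whether $u\sim v$ and $v\sim w$ hold reduces everything to transitivity of $\to$ together with transitivity of the fixed linear order on each equivalence class. The central small observation is that if $u\to v$ with $u\not\sim v$, then any $w$ with $w\sim v$ also satisfies $u\not\sim w$, since $w\to u$ together with $u\to v$ would force $v\to u$, contradicting $u\not\sim v$. Equality of the two comparability graphs follows easily: if $u$ and $v$ are $\to$-comparable but not $\sim$-equivalent, the asymmetric direction of $\to$ becomes a $\leq$-relation; if they are $\sim$-equivalent and distinct, the chosen linear order on $[u]$ makes one $\leq$ the other.

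Finally, one invokes the classical result that the comparability graph of a partial order is perfect: for any induced sub-poset the clique number is the length of a longest chain, and a proper colouring with this many colours is obtained by assigning each element the height of a longest chain ending at it, which is essentially Mirsky's theorem. The only slightly delicate part of the whole argument is the case analysis for the transitivity of $\leq$, and even this is routine bookkeeping rather than a real obstacle.
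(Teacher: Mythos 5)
Your proposal is correct and follows essentially the same route as the paper: reduce to a genuine partial order by collapsing the two-way-comparable pairs into equivalence classes, totally order each class to refine the preorder without changing the comparability graph, and then invoke the easy (Mirsky/Dilworth) direction that comparability graphs of posets are perfect. The only cosmetic difference is that you phrase the colouring via the height function while the paper strips off minimal elements inductively; these are the same argument.
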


\begin{proof}
In the case of a \emph{partial order}, a partial preorder satifying
\[(x\to y\hbox{ and }y\to x)\Rightarrow x=y,\]
this is one part (the easier part) of \emph{Dilworth's Theorem}~\cite{dilworth}.
For completeness, we sketch the proof.

Let $\Gamma$ be the comparability graph of a partial order. A clique in
$\Gamma$ is a chain in the partial order. If the largest chain has size $k$,
we produce a proper $k$-colouring of the graph as follows: the minimal
elements of the partial order form an independent set, to which we assign
the first colour, and then remove these vertices. Since every maximal chain
contains one of these vertices, the largest chain in the resulting graph has
size $k-1$, and so by induction it can be properly coloured with the
remaining $k-1$ colours.

Now suppose that $\to$ is a partial preorder on $X$. Define a relation
$\equiv$ on $X$ by the rule
\[(x\equiv y)\Leftrightarrow(x\to y\hbox{ and }y\to x).\]
Then $\equiv$ is an equivalence relation. Now define a relation $\le$ on the
set of equivalence classes by the rule that $[x]\le[y]$ if $x'\to y'$ for
some (and hence every) $x'\in[x]$ and $y'\in[y]$; then $\le$ is a partial order.

Now we can refine the partial preorder to a partial order by simply putting
a total order on each equivalence class; this does not change the
comparability graph, which is thus perfect.
\end{proof}

Now we can show that the power graph is a perfect graph. Indeed, we do not
require a group; a semigroup will suffice, or indeed any
\emph{power-associative magma}, that is, a set with a binary operation such
that the associative law holds for the powers of a single element (so that
powers $x^n$ can be unambiguously defined).

\begin{prop}
The directed power graph of any power-associative magma is a partial preorder.
\end{prop}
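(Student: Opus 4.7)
The plan is to unpack the definition directly: we must show that the relation $\to$ on the magma $M$, defined by $u\to v$ if and only if $v=u^n$ for some positive integer $n$, is both reflexive and transitive.

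Reflexivity is essentially by inspection, since $u=u^1$, so $u\to u$ for every $u\in M$; here we use only that the operation permits forming $u^1=u$. For transitivity, suppose $u\to v$ and $v\to w$, so that $v=u^m$ and $w=v^n$ for some positive integers $m,n$. I would then argue that $w=(u^m)^n=u^{mn}$, which gives $u\to w$ and closes the argument.

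The single non-trivial point, and the thing the power-associativity hypothesis is meant to supply, is the identity $(u^m)^n=u^{mn}$. The standard way to handle this is to observe that power-associativity says exactly that the subset $\{u,u^2,u^3,\ldots\}$ generates an associative submagma, so within this submagma one can compute $(u^m)^n$ by regrouping as $u\cdot u\cdots u$ ($mn$ times) and obtain $u^{mn}$. Writing this as a short induction on $n$ (using $(u^m)^{n+1}=(u^m)^n\cdot u^m=u^{mn}\cdot u^m=u^{mn+m}$) makes the use of associativity among the powers of $u$ explicit.

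I do not expect any real obstacle here; the only thing to be careful about is that the author's definition of the directed power graph on distinct vertices excludes loops, so one should either adopt the convention that $u\to u$ (as a partial preorder requires) or phrase the statement as: adjoining the loops makes $\vec{P}(M)$ the graph of a partial preorder. Once the law $(u^m)^n=u^{mn}$ is recorded, reflexivity and transitivity are both one-line checks.
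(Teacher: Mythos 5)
Your proof is correct and follows exactly the same route as the paper, which simply cites the identities $x^1=x$ and $(x^m)^n=x^{mn}$; you merely spell out the induction justifying the second identity from power-associativity and note the (harmless) convention about loops. Nothing further is needed.
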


\begin{proof}
This follows immediately from the facts that $x^1=x$ and $(x^m)^n=x^{mn}$.
\end{proof}

\begin{corollary}\label{c:ppg}
The undirected power graph of a power-associative magma is a perfect graph.\qed
\end{corollary}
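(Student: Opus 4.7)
The plan is very short: this corollary is essentially a two-line consequence of the two propositions that immediately precede it, so the work is just in spelling out that the undirected power graph really is the comparability graph of the partial preorder supplied by the previous proposition.

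First I would recall that the directed power graph $\vec P$ of a power-associative magma $M$ has an arc $(u,v)$ precisely when $v$ is a (positive) power of $u$. By the preceding proposition, the relation $u\to v$ defined in this way is reflexive ($u=u^1$) and transitive (from $(x^m)^n=x^{mn}$), so it is a partial preorder on $M$.

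Next I would compare definitions. The undirected power graph $P(M)$ has, by definition, an edge between distinct vertices $u$ and $v$ exactly when $u=v^i$ or $v=u^j$ for some positive integer $i$ or $j$; that is, when $u\to v$ or $v\to u$ in the partial preorder $\to$. This is literally the definition of the comparability graph of $\to$ given in the paragraph preceding Proposition~\ref{t:ppp}.

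Therefore $P(M)$ is the comparability graph of a partial preorder, and Proposition~\ref{t:ppp} yields at once that $P(M)$ is perfect. There is no real obstacle here; the only thing to be careful about is the bookkeeping between the ``distinct vertices'' convention in the power-graph definition and the reflexivity of the preorder, but since loops are ignored in both the comparability graph and the power graph this causes no trouble.
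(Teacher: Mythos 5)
Your proposal is correct and is exactly the argument the paper intends: the corollary is stated with a \qed because it follows immediately by observing that the undirected power graph is the comparability graph of the partial preorder given by the directed power graph, and then invoking Proposition~\ref{t:ppp}. Your extra care about the distinct-vertices convention versus reflexivity is a reasonable bit of bookkeeping but does not change the substance.
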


\begin{remark}
This result holds also for infinite power-associative magmas, if we say that
an infinite graph is perfect if every finite induced subgraph has clique
number equal to chromatic number.
\end{remark}

\begin{remark}
This satisfactory result shows, for example, that the power graph of a group
cannot contain the $5$-cycle $C_5$ as an induced subgraph (this graph has
clique number $2$ and chromatic number~$3$). In the remainder of this paper
we look at other examples. But first we pose an open problem concerning two
closely related graphs.

The \emph{enhanced power graph} of a group $G$ is the graph with vertex set
$G$, in which $u$ and $v$ are joined if and only if they are both powers of
an element $w$. The \emph{commuting graph} of $G$ is the graph in which $u$
and $v$ are joined if and only if $uv=vu$. It is clear that the power graph
is a spanning subgraph of the enhanced power graph, which is itself a spanning
subgraph of the commuting graph.
\end{remark}

\begin{problem}
\begin{enumerate}
\item For which groups $G$ is the enhanced power graph of $G$ a perfect graph?
\item For which groups $G$ is the commuting graph of $G$ a perfect graph?
\end{enumerate}
\end{problem}

We make two observations about these problems.

\begin{remark}
The paper of Aalipour \emph{et~al.}~\cite{aacns} contains classifications
of groups for which either the enhanced power graph or the commuting graph
is equal to the power graph. These groups are examples satisfying the
conditions of the problem. So the question is: What other groups can occur?
\end{remark}

\begin{remark}
It is not the case that the enhanced power graph or the
commuting graph of every group is perfect. For the commuting graph, we may
take $G=S_5$, the symmetric group of degree~$5$; the five transpositions
\[(1,2), (3,4), (5,1), (2,3), (4,5)\]
have the property that the induced subgraph is $C_5$. For the enhanced
power graph, we can work in a larger symmetric group, and take five cycles of
pairwise coprime lengths with the same intersection pattern as above, using
the fact that two elements of coprime order are joined in the enhanced power
graph if and only if they commute.
\end{remark}

\section{Cographs}

A graph $G$ is a \emph{cograph} if it has no induced subgraph isomorphic to
the four-vertex path $P_4$. Cographs form the smallest class of graphs 
containing the $1$-vertex graph and closed under the operations of disjoint
union and complementation.

Cographs have been rediscovered several times and given several different
names: for example, Sumner~\cite{sumner} called them \emph{heredditary Dacey
graphs}. The class of cographs contains the threshold graphs that we discuss
later, and is included in the class of comparability graphs (they are the
comparability graphs of \emph{N-free partial orders}, those obtained from the
one-point order by the operations of disjoint union and ordered sum). Thus,
cographs are perfect graphs.

In this section, we determine completely the finite nilpotent groups whose
power graph is a cograph. We also give a partial analysis of the question
for more general groups, using the concept of the prime graph (or
Gruenberg--Kegel graph) of a group.

\subsection{Nilpotent groups}

Recall that a finite group $G$ is \emph{nilpotent} if and only if it is a
direct product of its Sylow $p$-subgroups over primes $p$ dividing $|G|$. 
Note that, in a nilpotent group, elements of different prime orders commute.
Our first result determines which finite nilpotent groups $G$ have the property
that their power graphs are cographs.

We begin with:

\begin{lemma}\label{l:pp}
The power graph of a finite group cannot have an induced $4$-vertex path or
cycle in which all four vertices are elements whose orders are powers of the
same prime~$p$.
\end{lemma}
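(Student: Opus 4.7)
The plan is to translate adjacency in the power graph restricted to $p$-elements into comparability of cyclic subgroups, and then argue by a short case analysis. The underlying facts I will use are: (i) in any finite group, $x\sim y$ in $P(G)$ if and only if $\langle x\rangle\subseteq\langle y\rangle$ or $\langle y\rangle\subseteq\langle x\rangle$; and (ii) the subgroup lattice of a cyclic $p$-group is a chain, so if two cyclic $p$-subgroups both lie inside a common cyclic $p$-subgroup, they are automatically comparable (hence their generators are adjacent in $P(G)$).

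With these in hand, suppose for contradiction that there is an induced $P_4$ on $p$-elements, with vertices $a,b,c,d$ and edges $ab,bc,cd$. I would focus first on the length-two subpath through $b$, namely the triple $a,b,c$ with non-edge $ac$. The edges $ab$ and $bc$ give four possible orientations of the two containments; transitivity immediately kills $\langle a\rangle\subseteq\langle b\rangle\subseteq\langle c\rangle$ and $\langle c\rangle\subseteq\langle b\rangle\subseteq\langle a\rangle$, while the configuration $\langle a\rangle,\langle c\rangle\subseteq\langle b\rangle$ is killed by fact (ii), since it would force $\langle a\rangle$ and $\langle c\rangle$ to be comparable. The only surviving option is $\langle b\rangle\subseteq\langle a\rangle$ and $\langle b\rangle\subseteq\langle c\rangle$. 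Repeating the same analysis on the triple $b,c,d$ with edges $bc,cd$ and non-edge $bd$ forces $\langle c\rangle\subseteq\langle b\rangle$ and $\langle c\rangle\subseteq\langle d\rangle$. But then $\langle b\rangle=\langle c\rangle$, so $b$ and $c$ have identical closed neighborhoods in $P(G)$; in particular $a\sim b$ would give $a\sim c$, contradicting the non-edge $ac$.

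For an induced $C_4$ with vertices $a,b,c,d$ in cyclic order (edges $ab,bc,cd,da$; non-edges $ac,bd$), I would apply the same triple analysis twice. The triple $a,b,c$ (edges $ab,bc$, non-edge $ac$) forces $\langle b\rangle\subseteq\langle a\rangle$ and $\langle b\rangle\subseteq\langle c\rangle$, while the triple $b,a,d$ (edges $ab,ad$, non-edge $bd$) forces $\langle a\rangle\subseteq\langle b\rangle$ and $\langle a\rangle\subseteq\langle d\rangle$. Combining the first halves yields $\langle a\rangle=\langle b\rangle$, and then $\langle a\rangle=\langle b\rangle\subseteq\langle c\rangle$ contradicts the non-edge $ac$.

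The only mildly fiddly part is bookkeeping the direction of the containments in the triple analysis; once that is done, everything reduces to the observation that subgroups of a cyclic $p$-group are totally ordered, which is why the same argument does not apply across different primes.
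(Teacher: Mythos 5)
Your proof is correct, and it is essentially the paper's argument in different clothing: both reduce adjacency among $p$-elements to comparability of cyclic subgroups, exploit the fact that the subgroups of a cyclic $p$-group form a chain (the paper cites this as completeness of $P(C_{p^n})$), and analyse the two overlapping triples around the middle edge. The only cosmetic difference is that the paper concludes directly that neither orientation of the containment between $\langle b\rangle$ and $\langle c\rangle$ is possible, so $b\not\sim c$, whereas you let the two triples force $\langle b\rangle=\langle c\rangle$ and then contradict the non-edge $ac$.
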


\begin{proof}
Suppose that $(a,b,c,d)$ is such a path or cycle.

In the directed power graph $\vec{P}(G)$, we cannot have $b\to c$. For suppose
that $b\to c$. Then $c$ is a power of $b$, and one of $a$ and $b$ is a power of
the other; so $a,b,c$ are all contained in a cyclic $p$-group. But the power
graph of a cyclic $p$-group is complete, by Proposition~\ref{p:complete}, and
so there is an edge $\{a,c\}$, contrary to assumption.

By the same argument applied to $(b,c,d)$, we cannot have $c\to b$. Therefore
$b$ and $c$ are not joined in the power graph, a contradiction.
\end{proof}  

\begin{theorem}\label{t:nilp_cograph}
Let $G$ be a finite nilpotent group. Then $P(G)$ is a cograph if and only if
either $|G|$ is a prime power, or $G$ is cyclic of order $pq$ for distinct
primes $p$ and $q$.
\end{theorem}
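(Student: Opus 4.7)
The plan is to handle the two directions separately, using the structure theorem that a nilpotent $G$ is the direct product of its Sylow subgroups, $G=P_1\times\cdots\times P_k$. For the forward direction, if $|G|$ is a prime power then every four-vertex subset of $P(G)$ consists of elements whose orders are powers of the same prime, so Lemma~\ref{l:pp} immediately rules out an induced $P_4$. If $G=C_{pq}$, I would observe that the identity together with the $\phi(pq)=(p-1)(q-1)$ generators form a set of universal vertices in $P(G)$, while the remaining elements split into a clique of size $p-1$ (the elements of order $p$, all in the unique subgroup of order $p$) and a clique of size $q-1$ (similarly), with no edges between the two cliques. Since the class of cographs is closed under disjoint union and join, $P(C_{pq})$ is a cograph.

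For the reverse direction, suppose $G$ is not a prime power and $G\ne C_{pq}$, so either $k\ge 3$ or $k=2$ and some Sylow subgroup has order at least the square of its prime. The strategy is to exhibit an induced $P_4$ explicitly, using the key fact that a cyclic group has a unique subgroup of each prime order dividing its size; this controls precisely when an element lies inside a given cyclic subgroup. When $k\ge 3$, pick elements $a_i\in P_i$ of prime order $p_i$ for $i=1,2,3$; then $a_1,\,a_1a_2,\,a_2,\,a_2a_3$ is an induced $P_4$, the three consecutive pairs being generated (via CRT) inside the cyclic groups $\langle a_1a_2\rangle$ and $\langle a_2a_3\rangle$, and the three non-edges following by comparing the prime divisors of the respective element orders.

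For the remaining case $k=2$ with, say, $|P_1|\ge p_1^2$, I would split according to the structure of $P_1$. If $P_1$ contains an element $\alpha$ of order $p_1^2$, set $a=\alpha^{p_1}$ and pick $b\in P_2$ of order $p_2$; then $b,\,ab,\,a,\,\alpha$ is an induced $P_4$. Otherwise $P_1$ has exponent $p_1$, and $|P_1|\ge p_1^2$ then forces $P_1$ to contain $C_{p_1}\times C_{p_1}$, which provides two distinct cyclic subgroups $\langle a\rangle$ and $\langle c\rangle$ of order $p_1$; with $b\in P_2$ of order $p_2$, the vertices $a,\,ab,\,b,\,cb$ form an induced $P_4$. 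I expect the main obstacle to be in setting up this two-Sylow case analysis so that every relevant $p$-group is covered: for instance, a generalized quaternion $2$-group has a \emph{unique} subgroup of order $2$, so Subcase~2b would fail for it, but it always contains an element of order $4$ and so falls into Subcase~2a. Once the split is correct, each $P_4$ verification reduces to a short check of orders, where the non-edges follow because the cyclic subgroups generated by the relevant products contain unique prime-order subgroups that do not match the given candidates.
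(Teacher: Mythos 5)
Your proof is correct and follows essentially the same route as the paper: the prime-power case via Lemma~\ref{l:pp}, the same $P_4$ witness for three primes, and the same two induced $P_4$'s in the two-prime case (one from an element of order $p^2$ with an element of order $q$, one from two distinct subgroups of order $p$ with an element of order $q$), merely reorganized as a dichotomy on whether the large Sylow subgroup has an element of order $p^2$ rather than the paper's ``first cyclic, then prime order'' split. The only real variation is your verification that $P(C_{pq})$ is a cograph by writing it as a join of a clique with a disjoint union of two cliques, where the paper instead notes that the four vertices of an induced $P_4$ in a cyclic group would need pairwise distinct orders; both arguments are sound.
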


\begin{proof}
Lemma~\ref{l:pp} shows that if $G$ is a $p$-group with $p$ prime, then $P(G)$
contains no induced path of length $3$, and so it is a cograph.

Now we show that $G=C_{pq}$ is a cograph if $p$ and $q$ are distinct primes.
In any group, two elements $a,b$ satisfy $a\to b$ and $b\to a$ if and only
if $\langle a\rangle=\langle b\rangle$; in a cyclic group, this is equivalent
to the condition that $a$ and $b$ have the same order, and hence the same
neighbours. So if $(a,b,c,d)$ is an induced path, then no two of $a,b,c,d$
have the same order. Hence one of them is the identity. But the identity is
joined to all other vertices, a contradiction.

\medskip

Conversely, let $G$ be a finite nilpotent group whose power graph is a cograph.
Suppose first that three primes $p,q,r$ divide $|G|$.
Let $a,b,c$ be elements with orders $p,q,r$ respectively. These three elements
commute pairwise; so $ab$ and $bc$ have orders $pq$ and $qr$ respectively.
Then $(a,ab,b,bc)$ is an induced path, a contradiction.

So suppose that only two primes $p$ and $q$ divide $|G|$. Then $G=P\times Q$
where $P$ is a $p$-group and $Q$ a $q$-group.

Suppose first that $P$ is not cyclic. Then there are elements $a,b\in P$ which
are not adjacent in the power graph. If $c$ is a non-identity element of $Q$,
then $(a,ac,c,bc)$ is an induced path in $P(G)$. So $P$ (and similarly
$Q$) is cyclic.

Now suppose that $|P|>p$. If $a$ is an element of order $p^2$ in $P$, and
$b$ an element of order $q$ in $Q$, then $(a,a^p,a^pb,b)$ is an induced path
in $P(G)$. (These elements have orders $p^2$, $p$, $pq$, $q$ respectively; so
non-consecutive elements are not joined.)

So $|P|=p$, and similarly $|Q|=q$; thus $G=C_p\times C_q=C_{pq}$.
\end{proof}

\subsection{The prime graph, or Gruenberg--Kegel graph}

Suppose that $G$ is a group which contains no element whose order is the
product of two primes. Then every element of $G$ has prime power order, and
every edge of the power graph joins elements of the same prime power order. 
Now Lemma~\ref{l:pp} shows that $P(G)$ contains no induced $P_4$, and so it
is a cograph

These considerations lead us to the notion of the \emph{prime graph} 
$\Pi(G)$ of the group $G$. The vertices of $\Pi(G)$ are the prime divisors
of $|G|$; there is an edge joining $p$ to $q$ if $G$ contains an element
of order $pq$. The argument in the previous paragraph shows:

\begin{theorem}
Let $G$ be a group whose prime graph is a null graph. Then $P(G)$ is a
cograph.
\end{theorem}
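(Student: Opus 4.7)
The plan is to show that under the hypothesis every element of $G$ has prime power order, that every edge of $P(G)$ between nonidentity elements then links elements whose orders are powers of the \emph{same} prime, and finally to invoke Lemma~\ref{l:pp} to rule out an induced $P_4$.

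First I would argue that if $\Pi(G)$ has no edges then every $g\in G$ has prime power order: for if the order $n$ of $g$ were divisible by two distinct primes $p,q$, then $g^{n/(pq)}$ would have order exactly $pq$, producing an edge in $\Pi(G)$. Next I would observe that if $u,v$ are nonidentity elements of prime power order joined in $P(G)$, then their orders are powers of a common prime: one is a power of the other, and every power of an element of order $p^i$ has order dividing $p^i$, hence is a power of the same prime $p$ (or the identity).

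Now suppose for contradiction that $P(G)$ contains an induced path $(a,b,c,d)$. The identity is adjacent to every other vertex of $P(G)$, so it cannot appear as one of $a,b,c,d$ (it would have too many neighbours on the path). Hence all four vertices are nonidentity elements, and by the first step each has prime power order. Let $b$ have order a power of $p$. The edge $\{a,b\}$ forces $a$ to have order a power of $p$ by the second observation; applied to $\{b,c\}$ it forces $c$ to have order a power of $p$; and then $\{c,d\}$ forces $d$ likewise. Thus $a,b,c,d$ are four elements whose orders are all powers of the single prime $p$, contradicting Lemma~\ref{l:pp}.

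I expect no serious obstacle: the argument is a bookkeeping exercise once the two preliminary observations are in place. The only care needed is the quick check that the identity cannot be a vertex of an induced $P_4$, and the propagation of the shared prime along the path through the middle vertex $b$; once these are clear, Lemma~\ref{l:pp} closes the argument immediately and shows $P(G)$ is $P_4$-free, hence a cograph.
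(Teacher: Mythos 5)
Your proposal is correct and follows essentially the same route as the paper: the null prime graph forces every element to have prime power order, every edge of $P(G)$ then joins elements whose orders are powers of a common prime (after disposing of the identity, which cannot lie on an induced $P_4$), and Lemma~\ref{l:pp} rules out an induced $P_4$. The extra care you take with the identity and with propagating the shared prime along the path merely makes explicit what the paper leaves implicit.
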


The prime graph was introduced by Gruenberg and Kegel in in an unpublished
manuscript studying integral representations of groups in 1975. (The prime
graph is often called the \emph{Gruenberg--Kegel graph}.) They observed
that there are very strong restrictions on the structure of a group whose
prime graph is disconnected. Subsequently, Williams~\cite{williams} (a student
of Gruenberg) published their theorem. With a small addition, it states the
following. Here a $2$-Frobenius group is a group with normal subgroups
$H$ and $K$ with $K\le H$ such that
\begin{enumerate}
\item $H$ is a Frobenius group with Frobenius kernel $K$;
\item $G/K$ is a Frobenius group with Frobenius kernel $H/K$.
\end{enumerate}
The symmetric group $S_4$ is an example, with $H=A_4$ and $K=V_4$.

Moreover, suppose that $G$ has even order. Let $\pi_1$ be the set of primes
which are vertices of the connected component containing the prime $2$. A
$\pi_1$-group denotes a group whose order is divisible only by primes
in~$\pi_1$.

\begin{theorem}
The prime graph of $G$ is disconnected only if $G$ satisfies one of the
following:
\begin{enumerate}
\item $G$ is Frobenius or $2$-Frobenius;
\item $|G|$ is even, and $G$ is an extension of a nilpotent $\pi_1$-group by a
simple group by a $\pi_1$-group.
\end{enumerate}
\end{theorem}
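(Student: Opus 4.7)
The plan is to follow the classical Gruenberg--Kegel--Williams argument, which is a substantial structural theorem rather than a short calculation. I would begin by fixing $G$ with disconnected $\Pi(G)$, writing $\pi_1, \ldots, \pi_t$ (with $t \geq 2$) for the connected components, and taking $\pi_1$ to contain $2$ when $|G|$ is even. The disconnection hypothesis means that for $i \neq j$, no element of $G$ has order divisible by both a prime of $\pi_i$ and a prime of $\pi_j$; equivalently, a $p$-element and a $q$-element with $p \in \pi_i$ and $q \in \pi_j$ can never commute (otherwise their product would have order $pq$, linking the two components).

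The first main step is to pin down the Hall structure: for each $i \geq 2$ I would show that Hall $\pi_i$-subgroups exist, are conjugate, and are nilpotent. This comes from combining the non-commutation property above with Burnside's normal $p$-complement theorem and standard results on Hall subgroups in groups where elements of coprime order from specified sets cannot commute. The second step is a dichotomy based on the generalized Fitting subgroup $F^{*}(G)$. If $F^{*}(G)$ is soluble, I would use coprime action together with the nilpotent Hall $\pi_i$-subgroups ($i \geq 2$) to exhibit a Frobenius configuration: a normal subgroup acted on fixed-point-freely by a complement. Iterating this once if necessary gives case~(a), namely $G$ Frobenius or $2$-Frobenius.

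If on the other hand $F^{*}(G)$ has a non-abelian simple component $L$, then $|L|$, and hence $|G|$, is even, and I would aim to exhibit a tower $1 \le N \le M \le G$ with $N$ a nilpotent $\pi_1$-group, $M/N$ simple, and $G/M$ a $\pi_1$-group, which is case~(b). The main obstacle is showing that $M/N$ is a \emph{single} non-abelian simple group rather than a direct product of several, and that its prime graph has the desired disconnection; this requires detailed knowledge (through the classification of finite simple groups) of the prime graphs of the alternating, Lie type, and sporadic families. In practice I would not attempt to reprove this from scratch. Instead, I would organise the proof as the two-step reduction above, using the non-commutation observation as the engine, and cite Williams (with subsequent refinements by Kondrat'ev and others) for the classification-dependent final step.
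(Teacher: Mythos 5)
The paper does not actually prove this theorem: it is quoted as the Gruenberg--Kegel theorem, published by Williams \cite{williams}, and the authors give only the attribution, not an argument. So there is no in-paper proof to measure your sketch against; the honest answer for this statement is simply ``cite Williams,'' which is where your proposal also ends up. Your outline of the underlying argument is broadly faithful to the classical proof: the observation that a $p$-element and a $q$-element from different components cannot commute is the correct engine, the existence of nilpotent, isolated Hall $\pi_i$-subgroups for the components not containing $2$ is the right first structural step, and the soluble/insoluble dichotomy leading to the Frobenius--$2$-Frobenius case versus the $\pi_1$-by-simple-by-$\pi_1$ tower is the right shape. Two caveats. First, the standard dichotomy is on the solubility of $G$ itself (via the Feit--Thompson odd order theorem, which is what forces $|G|$ even in case (b)), rather than on $F^*(G)$; your $F^*(G)$ variant can be made to work but is not quite the textbook route. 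Second, you overstate the dependence on the classification of finite simple groups: the structure theorem as stated here is CFSG-free (it needs Feit--Thompson and Frobenius/Hall theory), and CFSG enters only in the \emph{subsequent} step, also discussed in the paper, of tabulating which simple groups actually have disconnected prime graphs. Since you explicitly defer the heavy lifting to Williams, your proposal is acceptable as a sketch, but it should not be read as a self-contained proof.
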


Williams observed that, in the second case of the theorem, the simple
group itself must have disconnected prime graph, and he analysed the
simple groups to find which have disconnected prime graph.  The results
cannot be summarised here since they comprise extended tables. The tables
contained some errors which were corrected by Kondrat'ev and Mazurov~\cite{km}.

The only simple groups with the property that the prime graph is a
null graph are the alternating groups $A_5$ and $A_6$ and the groups
$\mathrm{PSL}(2,7)$, $\mathrm{PSL}(2,8)$, $\mathrm{PSL}(2,17)$,
$\mathrm{PSL}(3,4)$, and $\mathrm{Sz}(8)$. So these groups have power graphs
which are cographs.

In the other direction, Williams observed that, if $G$ is a non-solvable group
with disconnected prime graph and $\pi$ is the vertex set of a connected
component not containing $2$, then $G$ has a nilpotent Hall $\pi$-subgroup
which contains the centraliser of each of its elements. By our earlier
result, we obtain the following.

\begin{theorem}
Let $G$ be a non-solvable group whose power graph is a cograph, and suppose
that the prime graph of $G$ is disconnected. Then, with possibly one
exception, each component of the prime graph is either an isolated vertex or
an isolated edge joining two primes which divide $|G|$ to the first power only
(the exception, if any, is the component containing $2$).
\end{theorem}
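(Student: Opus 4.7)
The plan is to combine Williams' observation, quoted just above the theorem, with Theorem~\ref{t:nilp_cograph}. Fix any connected component $\pi$ of $\Pi(G)$ that does not contain the prime $2$. Williams' observation then provides a nilpotent Hall $\pi$-subgroup $H\le G$. Since $H$ is a subgroup, the induced subgraph of $P(G)$ on the vertex set $H$ coincides with the power graph $P(H)$; being an induced subgraph of a cograph, it is itself a cograph.

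Next I would invoke Theorem~\ref{t:nilp_cograph} applied to the finite nilpotent group $H$: either $|H|$ is a prime power $p^k$, in which case $\pi$ consists of the single prime $p$ and is an isolated vertex of $\Pi(G)$; or $H\cong C_{pq}$ for two distinct primes $p$ and $q$, in which case $\pi=\{p,q\}$. In the latter case, because $H$ is Hall in $G$, the $p$-part of $|G|$ equals the $p$-part of $|H|$, namely $p$, and similarly for $q$, so $p$ and $q$ divide $|G|$ to the first power only. Moreover $C_{pq}$ contains an element of order $pq$, so $\{p,q\}$ is an edge of $\Pi(G)$; since $\pi$ is a whole connected component, this component is an isolated edge joining $p$ and $q$.

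The preceding argument applies verbatim to every connected component of $\Pi(G)$ with the single exception of the one containing $2$, which is exactly the exception permitted in the statement. The restriction to components disjoint from $\{2\}$ is built into Williams' observation and cannot be avoided: for the $\pi_1$-component, the Gruenberg--Kegel structure theorem only guarantees the much more elaborate extension of a nilpotent $\pi_1$-group by a simple group by a $\pi_1$-group, from which no analogous constraint on the shape of the component is immediate. I do not anticipate any real obstacle in this proof: once Williams' result is cited, what remains is a short appeal to Theorem~\ref{t:nilp_cograph} together with the essentially tautological fact that the induced subgraph of a power graph on a subgroup is again a power graph.
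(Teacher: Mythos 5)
Your proposal is correct and is precisely the argument the paper intends: the theorem is stated there without a written proof, being presented as an immediate consequence of Williams' observation (nilpotent Hall $\pi$-subgroup for each component $\pi$ not containing $2$) combined with Theorem~\ref{t:nilp_cograph}, exactly as you have spelled out. Your filling-in of the details --- that $P(H)$ is the induced subgraph of $P(G)$ on $H$, and that the Hall property forces $p$ and $q$ to divide $|G|$ to the first power in the $C_{pq}$ case --- is accurate and complete.
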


Information about the simple groups satisfying this condition can be obtained
from the results of \cite{williams,km}. However, some problems remain. We
give an example. For which prime powers $q$ is power graph of the group
$\mathrm{PSL}(2,q)$ a cograph? If $q$ is even, then the prime graph is the
union of three complete graphs, on the sets of prime divisors of $q$, $q-1$
and $q+1$; if $q$ is odd, we replace $q-1$ and $q+1$ by $(q-1)/2$ and
$(q+1)/2$.

\begin{enumerate}
\item Let $q=2^n$. If $n$ is even, say $n=2m$ with $m>1$, then
$2^{2m}-1=(2^m-1)(2^m+1)$, and the two factors cannot both be primes except
when $m=2$; so $q=4$ and $q=16$ are the only
examples. If $n$ is odd, then we require that $2^n-1$ has at most two prime
divisors; also $3\mid2^n+1$, so we require that $(2^n+1)/3$ is prime. Examples
include $q=8$, $32$, $128$, $2048$, $8192$, \ldots
\item Let $q$ be a power of a prime $p>3$. Then one of $(q-1)/2$ and $(q+1)/2$
is even, and one is divisible by $3$. So there are several possibilities:
\begin{itemize}
\item $(q-1)/2=2r$, $(q+1)/2=3s$, or $(q-1)/2=3s$, $(q+1)/2=2r$, with $r$ and
$s$ primes (possible values of $q$ include $27$, $43$, $67$, \ldots);
\item one of $(q-1)/2$ and $(q+1)/2$ is a power of $2$, the other is three
times a prime (examples $q=31$, $257$, \dots);
\item one of $(q-1)/2$ and $(q+1)/2$ is a power of $3$, the other is twice a
prime (examples $q=19$, $53$, $163$, \dots);
\item one of $(q-1)/2$ and $(q+1)/2$ is a power of $2$, the other a power of
$3$ (by the solution of Catalan's equation, this holds only for $q=5$, $7$
and $17$).
\end{itemize}
\item For $q$ a power of $3$, a similar analysis is possible. Examples 
include $q=9$, $27$, $243$, $2187$, \ldots
\end{enumerate}

\begin{problem}
Are there infinitely many prime powers $q$ such that the power graph of
$\mathrm{PSL}(2,q)$ is a cograph?
\end{problem}

\section{Chordal graphs}

A graph $\Gamma$ is \emph{chordal} if it contains no induced cycles of length
greater than $3$; in other words, every cycle on more than $3$ vertices has
a chord.

Chordal graphs also arise in several different contexts: they are the 
intersection graphs of subtrees of a tree, or the graphs with a perfect
elimination order~\cite{fg}. The class includes the split graphs and is
contained within the class of perfect graphs.

In this section we determine which finite nilpotent groups have chordal
power graphs.
We already know that power graphs contain no odd cycles of length greater
than $3$.

\begin{theorem}
If $G$ is a group of prime power order then $P(G)$ is chordal.
\end{theorem}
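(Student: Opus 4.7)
The plan is to reduce the statement directly to Lemma~\ref{l:pp}. Since $G$ is a $p$-group, every element of $G$ has order a power of the single prime~$p$, so the hypothesis of Lemma~\ref{l:pp} is automatically satisfied for every choice of four elements from $G$. Thus the lemma forbids every induced $P_4$ and every induced $C_4$ in $P(G)$, and we only need to bootstrap this from four-vertex configurations to arbitrarily long induced cycles.

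First I would suppose, for a contradiction, that $P(G)$ contains an induced cycle $(v_1,v_2,\ldots,v_k)$ with $k\ge 4$. The case $k=4$ is an immediate contradiction to Lemma~\ref{l:pp}. For $k\ge 5$, the key observation is that any four consecutive vertices on an induced cycle span an induced path: in the cycle $v_1,v_2,v_3,v_4$ are consecutive, and the pairs $\{v_1,v_3\}$, $\{v_2,v_4\}$, $\{v_1,v_4\}$ are non-edges of $P(G)$ because the cycle is induced and, when $k\ge 5$, $v_1$ and $v_4$ are distance~$3$ apart on the cycle rather than adjacent. Hence $v_1,v_2,v_3,v_4$ form an induced $P_4$ whose vertices all have order a power of $p$, contradicting Lemma~\ref{l:pp}.

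Therefore $P(G)$ contains no induced cycle on four or more vertices, which is exactly the definition of a chordal graph. There is no real obstacle here beyond noticing that Lemma~\ref{l:pp} already packages the needed local information; the prime-power hypothesis is used only to guarantee that every four elements of $G$ lie in the case handled by that lemma.
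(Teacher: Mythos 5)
Your proof is correct and follows essentially the same route as the paper: Lemma~\ref{l:pp} rules out induced $P_4$ and $C_4$, and any induced cycle on five or more vertices contains an induced $P_4$ among four consecutive vertices. The only cosmetic difference is that the paper disposes of odd cycles of length at least $5$ by citing the perfectness of power graphs, whereas you handle them with the same $P_4$ argument as the even ones.
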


\begin{proof}
We saw in Lemma~\ref{l:pp} that $P(G)$ has no induced path of length~$3$,
and hence no induced even cycle of length $6$ or greater. The same lemma
also shows that there is no induced $4$-cycle.
\end{proof}

\begin{lemma}\label{l:p2}
Let $G$ be a group whose order is a power of a prime $p$. Suppose that there
is no induced path of length $2$ in $P(G)$ not containing the identity.
Then either $G$ is cyclic or $G$ has exponent $p$.
Conversely, if $G$ is cyclic or has exponent $p$, then $P(G)\setminus\{1\}$
contains no induced path of length~$2$.
\end{lemma}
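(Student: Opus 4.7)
I would prove this biconditional by handling each direction separately.

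\textbf{Converse direction.} If $G$ is cyclic of prime power order, Proposition~\ref{p:complete} gives $P(G)$ complete, hence $P(G)\setminus\{1\}$ is a single clique and contains no induced $P_3$. If instead $G$ has exponent $p$, every non-identity element has order $p$, and two such elements are adjacent in $P(G)$ exactly when they generate the same subgroup of order $p$. Thus $P(G)\setminus\{1\}$ is a disjoint union of cliques (one per subgroup of order $p$), so there is no induced $P_3$.

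\textbf{Forward direction.} I would prove the contrapositive: given a non-cyclic $p$-group $G$ with an element of order $p^2$, construct an induced $P_3$ in $P(G)\setminus\{1\}$. Fix $a\in G$ of order $p^2$. Since $a\sim a^p$ in $P(G)$, it suffices to find a non-identity $c$ with $c\sim a^p$ and $c\not\sim a$, so that the induced subgraph on $\{a,a^p,c\}$ is a $P_3$. Because $a^p$ has order $p$, the condition $c\sim a^p$ forces either $c\in\langle a^p\rangle\subseteq\langle a\rangle$ (which would give $c\sim a$, ruling this out) or $a^p\in\langle c\rangle$. So I seek $c$ of order $p^2$ with $c\notin\langle a\rangle$ and $a^p\in\langle c\rangle$; such $c$ satisfies $c\not\sim a$ automatically, since $\langle a\rangle$ and $\langle c\rangle$ are then distinct cyclic subgroups of the same order $p^2$, neither containing the other. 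The natural construction is $c:=ab$ where $b\in G\setminus\langle a\rangle$ has order $p$ and commutes with $a$: then $c^p=a^pb^p=a^p$, so $c$ has order $p^2$, $a^p\in\langle c\rangle$, and $c\notin\langle a\rangle$ (else $b=a^{-1}c\in\langle a\rangle$).

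The crux is producing such a commuting element $b$, and I would argue in cases on the centralizer $C_G(a)$. When $C_G(a)\supsetneq\langle a\rangle$, take $g\in C_G(a)\setminus\langle a\rangle$ of minimal order: either $g$ already has order $p$, or $g$ has order $p^2$ with $g^p\in\langle a^p\rangle$, in which case an adjustment $ga^{-s}$ (with $s$ chosen so that $(ga^{-s})^p=1$) yields the required element of order $p$. The more delicate case is $C_G(a)=\langle a\rangle$; here the normalizer condition in $p$-groups ensures $N_G(\langle a\rangle)\supsetneq\langle a\rangle$, and the image of $N_G(\langle a\rangle)/\langle a\rangle$ in $\Aut(\langle a\rangle)$ is a non-trivial $p$-subgroup. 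Exploiting this conjugation action to locate an alternative cyclic subgroup of order $p^2$ meeting $\langle a\rangle$ in $\langle a^p\rangle$ (which then supplies the required $c$ directly, bypassing the commuting-$b$ construction) is where I expect the main technical difficulty to lie.
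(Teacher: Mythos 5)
Your converse direction and the opening of your forward direction agree with the paper: the observation that a commuting pair ($a$ of order $p^2$, $b$ of order $p$ outside $\langle a\rangle$) yields the induced path $(a,a^p,ab)$ is exactly the paper's first step, and your criterion for $c$ (a cyclic subgroup of order $p^2$ other than $\langle a\rangle$ containing $a^p$) is the right target. The proof is incomplete, however, precisely where you say you expect difficulty, and your Case 1 also contains a false dichotomy, so the argument does not go through as written.

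Concretely: in Case 1, the minimal-order element $g$ of $C_G(a)\setminus\langle a\rangle$ need not have order $p$ or $p^2$. Take $G=Q_{16}=\langle g,b\mid g^8=1,\ b^2=g^4,\ bgb^{-1}=g^{-1}\rangle$ and $a=g^2$. Then $C_G(a)=\langle g\rangle\cong C_8$, every element of $C_G(a)\setminus\langle a\rangle$ has order $8$, and $G$ has no element of order $2$ other than $g^4=a^2$, so no commuting $b$ of the required kind exists anywhere in $G$; the witness here is $c=b$, which does \emph{not} commute with $a$. So a non-commutative argument is indispensable, not just in your Case 2 but even when $C_G(a)\supsetneq\langle a\rangle$. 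And Case 2 is a declared difficulty rather than a proof; the normalizer idea as sketched does not obviously produce a second cyclic subgroup of order $p^2$ through $a^p$ --- in $Q_8$ the conjugate $jij^{-1}=i^{-1}$ lands back inside $\langle i\rangle$, so conjugating $\langle a\rangle$ yields nothing new, even though $\langle j\rangle$ is exactly the subgroup you need. The paper closes this hole as follows: after ruling out $C_{p^2}\times C_p$ subgroups it deduces $Z(G)\cong C_p$ and that every element of order greater than $p$ has a power generating $Z(G)$; then, taking a non-central $b$ of order $p$ and $a$ of order $p^2$ that do not commute, either $b^{-1}ab\notin\langle a\rangle$ and $(a,a^p,b^{-1}ab)$ is the path, or $b^{-1}ab=a^{p+1}$ and the computation $(ab)^p=a^p$ gives the path $(a,a^p,ab)$. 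Some argument of this conjugation-based kind is needed to finish your proof.
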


\begin{proof}
If $G$ is cyclic then $P(G)$ is complete; if $G$ has exponent $p$, then
$P(G)$ consists of complete graphs of order $p$ with a single vertex $1$ in
common. So the converse holds.

So assume that $G$ is a $p$-group and $P(G)$ has no induced path of length~$2$
not containing the identity; assume for a contradiction that $G$ is neither
cyclic nor of exponent $p$.

First observe that $G$ cannot contain a subgroup $C_{p^2}\times C_p$. For
if $a$ and $b$ were generators of the factors in such a subgroup, then
$(a,a^p,ab)$ would be a path of length~$2$. In particular, the centre of $G$
is elementary abelian. If $Z(G)$ is not cyclic, then choose an element $a$
of order $p^2$ in $G$, and an element $b\in Z(G)\setminus\langle a\rangle$;
then $\langle a,b\rangle\cong C_{p^2}\times C_p$, a contradiction. Thus
$Z(G)\cong C_p$. This argument also shows that, if $a$ is any element of $G$
of order greater than $p$, then some power of $a$ generates $Z(G)$.

Suppose that
$b$ is an element of order $p$ not in $Z(G)$, and $a$ an element of order
$p^2$, so that $a^p\in Z(G)$. If $a$ and $b$ commute, then they generate
$C_{p^2}\times C_p$; so suppose not. Then $(b^{-1}ab)^p=a^p$. If
$b^{-1}ab\notin\langle a\rangle$, then $(a,a^p,b^{-1}ab)$ is a path of
length~$2$; so suppose that $b^{-1}ab\in\langle a\rangle$. Then
$\langle a,b\rangle$ is a non-abelian subgroup of order $p^3$ and exponent
$p^2$, and we may suppose that $b^{-1}ab=a^{p+1}$. Now we can compute that
$(ab)^p=a^p$, and so $(a,a^p,ab)$ is a path of length $2$.
\end{proof}

\begin{theorem}
Let $G$ be a finite nilpotent group which is not of prime power order. Then
$P(G)$ is chordal if and only if $|G|$ has two prime divisors, one
of the two Sylow subgroups is cyclic, and the other has prime exponent.
\end{theorem}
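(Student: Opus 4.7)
The plan is to prove both directions separately, writing $G = P \times Q$ for the Sylow $p$- and $q$-subgroups of $G$. For the "if" direction, assume (switching primes if necessary) that $P = C_{p^a}$ is cyclic and $Q$ has exponent $q$. I describe $P(G)$ structurally: the \emph{core} $P \times \{1\}$ is a clique by Proposition~\ref{p:complete}; for each cyclic subgroup $H \leq Q$ of order $q$ the \emph{block} $B_H := \{(x,y) : y \in H \setminus \{1\}\}$ is also a clique (here cyclicity of $P$ is essential); distinct blocks $B_H, B_{H'}$ are disconnected because $H \cap H' = \{1\}$; and $(x,1)$ is adjacent to $(x',y') \in B_H$ iff $\mathrm{ord}(x) \mid \mathrm{ord}(x')$. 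For any induced cycle $C$ of length $\geq 4$, each clique meets $C$ in at most two vertices, and the disconnection between blocks forces the block-vertices of $C$ to lie in a single $B_H$. A short case analysis on $|C \cap (P \times \{1\})|$ reduces the question to $|C|=4$ with two vertices in the core and two in one block, and then the four required edge/non-edge conditions yield a cyclic chain of strict inequalities among the $p$-adic valuations of the four $P$-coordinates, which is self-contradictory.

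For the converse, assume $P(G)$ is chordal. First, if $P$ has an element of order $p^2$ and $Q$ has an element of order $q^2$, then $C_{p^2 q^2} \leq G$, and $P(C_{p^2 q^2})$ has an induced $4$-cycle on elements of orders $q, pq^2, p^2q, p$ (the well-known induced $C_4$ in the comparability graph of the $3 \times 3$ divisor grid), contradicting chordality; so WLOG $Q$ has exponent $q$. The main construction: suppose $P$ contains elements $a_1, a_2$ such that $\langle a_1 \rangle$ and $\langle a_2 \rangle$ are incomparable but share some $z \neq 1$, and fix $c \in Q$ of order $q$. Then the four vertices $(a_1,c), (z,1), (a_2,c), (1,c)$ form an induced $C_4$ in $P(G)$: the four cycle edges are immediate (two use $z \in \langle a_i \rangle$, and the other two use that $1$ is a power of every element), while the two diagonals $(a_1,c)\sim(a_2,c)$ and $(z,1)\sim(1,c)$ both fail because $a_i \notin \langle a_{3-i}\rangle$ and $z \neq 1$. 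This rules out $P$ being non-cyclic of exponent $>p$. Finally, if $P$ is non-cyclic of exponent $p$ and $Q$ is also non-cyclic, then $G$ contains $C_p^2 \times C_q^2$, and a direct check exhibits an induced $C_8$ in its power graph on $(a_1,b_1), (a_1,1), (a_1,b_2), (1,b_2), (a_2,b_2), (a_2,1), (a_2,b_1), (1,b_1)$ for chosen generators $a_1, a_2$ of $C_p^2$ and $b_1, b_2$ of $C_q^2$; so in the exponent-$p$ case $Q$ must be cyclic. Combining, $G$ falls into exactly the stated class.

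The main obstacle is producing the pair $(a_1, a_2)$ with witness $z$ inside an arbitrary non-cyclic $p$-group $P$ of exponent $>p$. For abelian $P \cong C_{p^{k_1}} \times C_{p^{k_2}} \times \cdots$ with $k_1 \geq 2$, taking $a_1$ a largest-summand generator and $b$ a generator of another summand, the element $a_2 := a_1 b$ (when $\mathrm{ord}(b) < p^{k_1}$) or $a_2 := a_1 b^p$ (when $\mathrm{ord}(b) = p^{k_1}$) yields $\langle a_1 \rangle \cap \langle a_2 \rangle$ equal to a nontrivial power of $\langle a_1 \rangle$. For $p=2$ and $P = Q_{2^n}$ generalized quaternion (the only non-cyclic $p$-group not containing $C_p \times C_p$), any two distinct maximal cyclic subgroups intersect in the unique central involution, which serves as $z$; e.g., $a_1 = i,\ a_2 = j,\ z = -1$ in $Q_8$. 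The general non-cyclic case then reduces to one of these two via the classical fact that every non-cyclic $p$-group either contains $C_p \times C_p$ or is generalized quaternion.
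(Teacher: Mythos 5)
Your ``if'' direction is correct and is essentially the paper's argument reorganised: your decomposition of $P(G)$ into the core clique $P\times\{1\}$ and the mutually non-adjacent block cliques $B_H$, ending in a cyclic chain of strict inequalities among orders of $P$-coordinates, is a tidier form of the paper's reduction to $C_p\times C_{q^\beta}$ and its contradiction $m_1>m_3\ge m_2>m_4\ge m_1$. In the converse you also reproduce the paper's key constructions (the induced $C_4$ from incomparable cyclic subgroups with a common nontrivial element, and the induced $C_8$ when both Sylow subgroups are non-cyclic). One minor omission: you fix $G=P\times Q$ at the outset and never exclude three or more prime divisors; the paper does this with the induced $6$-cycle $(a,ab,b,bc,c,ca)$ on elements of orders $p,q,r$.

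The genuine gap is in the step ``this rules out $P$ being non-cyclic of exponent $>p$''. You need, in every non-cyclic $p$-group of exponent $>p$, elements $a_1,a_2$ with $\langle a_1\rangle,\langle a_2\rangle$ incomparable and $\langle a_1\rangle\cap\langle a_2\rangle\ne1$, and you propose to get them by reducing, via ``contains $C_p\times C_p$ or is generalized quaternion'', to your abelian and quaternion cases. That reduction fails: $C_p\times C_p$ has exponent $p$, so containing it yields nothing, and your abelian construction really requires a subgroup of type $C_{p^2}\times C_p$. The dihedral group $D_4$ of order $8$ refutes the existence claim outright: it is non-cyclic of exponent $4$, is not generalized quaternion, contains $C_2\times C_2$ but no $C_4\times C_2$, and its only cyclic subgroup of order $4$ is $\langle r\rangle$ while all other nontrivial cyclic subgroups have order $2$ and pairwise meet in the identity --- so no such pair $(a_1,a_2)$ exists. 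You should know that the paper's own proof breaks at the same point: it invokes Lemma~\ref{l:p2}, whose proof computes $(ab)^p=a^p$ in the nonabelian group of order $p^3$ and exponent $p^2$, which is false for $p=2$ (there $(ab)^2=1$), and indeed $P(D_4)\setminus\{1\}$ is a triangle plus four isolated vertices with no induced path of length $2$, contradicting that lemma. Worse, the gap is not patchable, because the theorem itself fails here: every induced cycle of length $\ge4$ in the comparability graph of a partial preorder is a ``crown'', a crown of length $4$ in $P(D_4\times C_q)$ would force two incomparable cyclic subgroups of $D_4$ to meet nontrivially, and longer crowns need at least three pairwise incomparable nontrivial cyclic subgroups each properly contained in two others, of which $D_4\times C_q$ has only two ($\langle r^2\rangle\times1$ and $1\times C_q$). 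Hence $P(D_4\times C_q)$ is chordal although $D_4$ is neither cyclic nor of prime exponent; any correct statement and proof must accommodate dihedral Sylow $2$-subgroups.
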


\begin{proof}
If there are three prime divisors of $|G|$, say $p$, $q$, $r$, let $a$, $b$,
$c$ be elements of these orders. Then $(a,ab,b,bc,c,ca,a)$ is an induced
$6$-cycle in $P(G)$.

Suppose that $p$ and $q$ are the only prime divisors of $|G|$, and $P$, $Q$
the corresponding Sylow subgroups. Suppose that $P$ is not cyclic. If also
$Q$ is not cyclic, then choose $a,b$ nonadjacent vertices in $P$ and $c,d$
nonadjacent vertices in $Q$; then $(a,ac,c,bc,b,bd,d,ad,a)$ is an induced
$8$-cycle. So $Q$ is cyclic. If $(a,b,c)$ is an induced path of length~$2$
on non-identity elements of $P$, and $d$ a non-identity element of $Q$, then
$b$ is a power of $a$ and a power of $c$ (but not the reverse) and so
$(b,ad,d,cd)$ is an induced $4$-cycle in $P(G)$. By Lemma~\ref{l:p2}, $P$ has
prime exponent. 

Now $P(C_{p^2}\times C_{q^2})$ is not chordal, since (if the factors are
generated by $a$ and $b$) it contains the cycle
\[((1,b^q),(a^p,b),(a^p,1),(a,b^q)).\]
So $P$ and $Q$ are not both cyclic of composite order; thus one is cyclic and
the other has prime exponent.

Conversely, let $H$ be a group of exponent $p$, and $G=H\times C_{q^\beta}$,
and suppose that $P(G)$ contains a cycle of even length.
Then $P(H)$ consists of complete graphs of size $p$ sharing only the identity.
Any two vertices of the form $(1,y)$ in $G$ are adjacent; so the cycle contains
at most two such vertices, and if two then they must be consecutive.
The remaining vertices all have the form $(h,y)$ with $h\ne 1$. Then
for two consecutive vertices $(h,y)$ and $(h',y')$, we must have
$\langle h\rangle=\langle h'\rangle$, and we may without loss of generality
assume that $h=h'$. So the entire cycle is contained in
$C_p\times C_{q^\beta}$, and it suffices to show that the power graph of 
this group is chordal. The argument above shows that the length of an even
cycle cannot be greater than $4$.

So finally, let $((1,y_1),(1,y_2),(a,y_3),(a,y_4))$ be a $4$-cycle. Let the
order of $y_i$ be $q^{m_i}$. Then we have
\[m_1>m_3\ge m_2>m_4\ge m_1,\]
since, for example, $y_1$ is a power of $y_4$ but is not a power of $y_3$.
But this is impossible.
\end{proof}

\begin{remark}
A group of exponent $2$ is abelian, and so is a direct product of cyclic
groups of order $2$. But there are non-abelian groups of exponent $p$
for any odd prime $p$, for example
\[G=\langle a,b,z:a^p=b^p=z^p=1, [a,b]=z, [a,z]=[b,z]=1\rangle\]
of order $p^3$.
\end{remark}

\section{Threshold graphs and split graphs}

A \emph{threshold graph} is a graph containing no induced subgraph isomorphic
to $P_4$, $C_4$ or $2K_2$ \cite{Chv,Henderson,Mahadev}. Thus every threshold
graph is a cograph. Threshold graphs form the smallest family of graphs
containing the one-vertex graph and closed under the operations of adding
an isolated vertex and adding a vertex joined to all others.

Applications of threshold graphs in computer science and psychology can be
found in \cite{Chv,Henderson}.

An $n$-vertex threshold graph is can be represented by a binary
string $a_1a_2\cdots a_n$ where $a_1=0$ and, for $2\leq a_i\leq n$, $a_i=0$ if
the vertex $i$ is added as an isolated vertex, and $a_i=1$ if it is added as
a vertex joined to all other existing vertices. This sequence gives a simple
representation of the graph.

A graph $G$ is \emph{split} if the vertex set is the disjoint union of two
subsets $A$ and $B$ such that $A$ induces a complete graph and $B$ a null graph.
Split graphs were introduced independently by several authors, first by
F\"oldes and Hammer~\cite{fh}, and are easy to recognise algorithmically
(in particular, they can be recognised by their degree sequences).

A graph is split if and only if it contains no induced subgraph isomorphic
to $C_4$, $C_5$ or $2K_2$.

\medskip

A common feature of these two classes is that both exclude $2K_2$. In \cite{Ma1}, Ma and Feng proved that power graph of a finite group is a split graph if and only is it does not contains $2K_2$. In fact we observe that the same is true for threshold graph.
In this section we are going to determine all finite groups whose power
graph excludes $2K_2$.

We say that the finite group $G$ satisfies the \emph{intersection condition}
(IC) if $G$ does not contain subgroups $H$ and $K$ such that both
$H\setminus K$ and $K\setminus H$ contain elements of order greater than~$2$.

\begin{theorem}
For a finite group $G$, the following conditions are equivalent:
\begin{enumerate}\itemsep0pt
\item $P(G)$ is a threshold graph;
\item $P(G)$ is a split graph;
\item $P(G)$ contains no induced subgraph isomorphic to $2K_2$;
\item $G$ satisfies the intersection condition;
\item $G$ is cyclic of prime power order, or an elementary abelian or dihedral
$2$-group, or cyclic of order $2p$, or dihedral of order $2p^n$ or $4p$, where
$p$ is an odd prime.
\end{enumerate}
\end{theorem}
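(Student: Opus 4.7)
The plan is to establish the cycle $(e)\Rightarrow(a)\Rightarrow(b)\Rightarrow(c)\Rightarrow(d)\Rightarrow(e)$. Two implications are immediate from the forbidden-subgraph characterisations: $(a)\Rightarrow(b)$ because every threshold graph is split (excluding $P_4$ implies excluding $C_5$, since $C_5$ contains an induced $P_4$), and $(b)\Rightarrow(c)$ because a split graph is $2K_2$-free by definition. The substantive arrows are therefore $(c)\Leftrightarrow(d)$, $(d)\Rightarrow(e)$, and $(e)\Rightarrow(a)$.

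For $(c)\Leftrightarrow(d)$ I would argue directly. Given an induced $2K_2$ on edges $\{a,b\},\{c,d\}$, no vertex is the identity (which is universal), and within the edge $\{a,b\}$ whichever of $a,b$ has larger order generates a cyclic subgroup $H$ containing the other; this generator must have order strictly greater than $2$, since a cyclic group of order $2$ contains no pair of distinct non-identity elements. The required non-adjacencies then place order-$>2$ elements in both $H\setminus K$ and $K\setminus H$, with $K$ defined analogously for $\{c,d\}$, contradicting IC. Conversely, given $x\in H\setminus K$ and $y\in K\setminus H$ of order $>2$ witnessing failure of IC, the candidate $2K_2$ is $\{x,x^{-1}\},\{y,y^{-1}\}$: distinctness uses $o(x),o(y)>2$, and any cross-edge such as $x\sim y$ would force $x\in\langle y\rangle\subseteq K$ or $y\in\langle x\rangle\subseteq H$, both contradicting the choice.

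The classification $(d)\Rightarrow(e)$ proceeds by restricting the prime factorisation and then the Sylow structure. Two distinct odd primes $p,q$ dividing $|G|$ immediately break IC via cyclic subgroups of orders $p$ and $q$, so $|G|=2^a p^b$ with $p$ odd. When $a=0$, every non-identity element has order $>2$ and IC forces the subgroup lattice to be a chain, so $G\cong C_{p^b}$. When $b=0$, IC forces the cyclic subgroups of order $\ge 4$ to be totally ordered by inclusion, so all such elements live in a maximal cyclic subgroup $C=\langle a\rangle$ of order $2^n$; any outside involution $t$ then acts by $t a t^{-1}=a^k$ with $k^2\equiv 1\pmod{2^n}$, and of the solutions $k\in\{\pm 1,\ \pm 1+2^{n-1}\}$ only $k\equiv -1$ avoids producing a second cyclic subgroup of order $\ge 4$ incomparable with $C$, yielding $G\cong D_{2^n}$ (otherwise $G$ is cyclic or elementary abelian). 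In the mixed case $a,b\ge 1$, an order-$4$ element paired with an order-$p$ element breaks IC, forcing the Sylow $2$-subgroup to have exponent $2$; analogously the Sylow $p$-subgroup is cyclic, and a final round of ad hoc exclusions — of $C_{2p^b}$ for $b\ge 2$, $V_4\times C_p$, dicyclic groups, $A_4\times C_2$, $D_{p^b}\times C_2$ for $b\ge 2$, and the like, each via an explicit IC-violating pair — leaves only $C_{2p}$, $D_{p^n}$, and $D_{2p}$.

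Finally, $(e)\Rightarrow(a)$ is verified case by case. For $C_{p^n}$ the power graph is complete by Proposition~\ref{p:complete}, and for $(C_2)^k$ it is the star $K_{1,2^k-1}$; both are threshold. In the remaining cases the power graph is a cyclic-subgroup core (complete for $D_{2^k}$ and $D_{p^n}$, and almost complete for $C_{2p}$ and $D_{2p}$ with the unique missing block of edges between the order-$2$ rotation $a^p$ and the order-$p$ rotations) together with a bouquet of pendant involutions (the reflections, absent for $C_{2p}$) attached at the identity. I would exhibit threshold-ness by a top-down vertex-deletion sequence: first delete the identity as a universal vertex, then delete the reflections one by one as isolated vertices, then peel the rotation subgroup from the outer generators inwards, removing the unique order-$2$ rotation $a^p$ as an isolated step in the $C_{2p}$ and $D_{2p}$ cases. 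The main obstacle is the classification step $(d)\Rightarrow(e)$, which requires ruling out a surprisingly rich menagerie of close-call candidates — semi-dihedral, generalised quaternion, modular maximal-cyclic, $C_4\times C_2$, dicyclic, $V_4\times C_p$, $A_4\times C_2$, $V_8\rtimes C_3$, and so on — each by producing an explicit IC-witnessing subgroup pair; the $2$-group cocycle calculation $k^2\equiv 1\pmod{2^n}$ is the technical heart. A secondary subtlety is that for $P(D_{2p})$ the naive "add vertices in order of degree" construction fails; only the top-down deletion described above realises the threshold sequence.
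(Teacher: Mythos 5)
Your overall skeleton coincides with the paper's: both reduce everything to the $2K_2$/intersection-condition equivalence via the quadruple $\{x,x^{-1},y,y^{-1}\}$, both classify the IC groups by Sylow analysis, and both finish with a case check of the groups in (e). Your extras are fine: the reverse implication ($2K_2$ forces an IC violation with $H=\langle a\rangle$, $K=\langle c\rangle$) is correct though not needed for the cycle of implications, the $k^2\equiv1\pmod{2^n}$ computation is a workable substitute for the paper's shorter argument (every element outside a maximal cyclic subgroup has order $2$, hence inverts it, hence the group is dihedral), and your explicit elimination orderings for $(e)\Rightarrow(a)$ replace the paper's slicker reduction (pendant reflections let one pass from $D_n$ to $C_n$, and for $C_{2p}$ one observes the complement is a star plus isolated vertices while $\{P_4,C_4,2K_2\}$ is self-complementary).

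The one genuine gap is the mixed case of $(d)\Rightarrow(e)$. A classification must prove exhaustiveness, and ``a final round of ad hoc exclusions \dots and the like --- leaves only $C_{2p}$, $D_{p^n}$, and $D_{2p}$'' is a list of counterexamples killed, not an argument that nothing else survives. You correctly force $T$ (the Sylow $2$-subgroup) to be elementary abelian and $P$ (the Sylow $p$-subgroup) to be cyclic, but you never establish the three facts that actually pin the group down: (i) $P$ is the \emph{unique} Sylow $p$-subgroup, hence normal (two Sylow $p$-subgroups are incomparable and each contains elements of odd order outside the other --- without this you cannot dispose of, say, Frobenius groups such as $(C_2)^3\rtimes C_7$, which pass both of your structural tests); (ii) since $\Aut(P)$ is cyclic and $T$ is elementary abelian, the image of $T$ in $\Aut(P)$ has order at most $2$, while at most one involution of $T$ may centralise $P$, so $|T|\le4$ and $|C_T(P)|\le2$; (iii) if $C_T(P)\ne1$ then $|P|=p$ (via $\langle x\rangle$ versus $\langle t,x^p\rangle$ for $x$ of order $p^2$). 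These three constraints yield exactly the case split $|T|=2$, $C_T(P)=1$ (dihedral of order $2p^n$); $|T|=2$, $C_T(P)=T$ ($C_{2p}$); $|T|=4$, $|C_T(P)|=2$ ($C_2\times D_p$, dihedral of order $4p$), which is how the paper closes the argument. Supplying (i)--(iii) would turn your sketch into a complete proof.
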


\begin{proof}
We noted already that each of (a) and (b) implies (c).

Next we show that (c) implies (d). If IC fails, let $H$ and $K$ be subgroups
of $G$ and $x\in H\setminus K$, $y\in K\setminus H$, where $x$ and $y$ are
elements with order greater than $2$. Then $\{x,x^{-1},y,y^{-1}\}$ induces
$2K_2$ (with edges $\{x,x^{-1}\}$ and $\{y,y^{-1}\}$ only).

Now we classify groups satisfying IC. First suppose that $G$ has prime power
order $p^n$. If $p$ is odd, then $G$ has at most one subgroup of each order
$p^i$ with $i<n$, which forces $G$ to be cyclic. So suppose that $p=2$. Let
$H$ be a cyclic subgroup of $G$ of maximum order. If $H=G$, then $G$ is
cyclic; if $|H|=2$, then $G$ is elementary abelian. By IC, every element
outside $H$ has order $2$, which implies that $G$ is dihedral.

Now suppose that $G$ does not have prime power order. By IC, $|G|$ is
divisible by only one odd prime (since two Sylow subgroups of odd prime power
order would violate IC). Let $p$ be an odd prime divisor of $|G|$, and $P$ a
Sylow $p$-subgroup of $G$. Then $P$ is the unique Sylow $p$-subgroup of $G$,
and so is normal. Let $T$ be a Sylow $2$-subgroup. If $T$ contains an element
$x$ of order greater than $2$, then $\langle x\rangle$ and $P$ violate IC; so
$T$ is elementary abelian. Also $P$ is cyclic, and its automorphism group
is therefore cyclic; so $T$ induces an automorphism group of $P$ of order $1$
or $2$, so $|C_T(P)|$ is a subgroup of index at most $2$ in $P$.

If $T$ contained distinct elements $s$ and $t$ centralising $P$, then
$\langle s,P\rangle$ and $\langle t,P\rangle$ violate IC. Also, if
$1\ne t\in C_T(P)$ and $|P|>p$, choose an element $x$ of order $p^2$ in $P$;
then $\langle x\rangle$ and $\langle t,x^p\rangle$ violate IC.

So we are left with three cases:
\begin{itemize}
\item $|P|=2$ and $C_T(P)=1$. Then the non-identity element of $T$ inverts
$P$, and $G$ is dihedral of order $2p^n$, where $|P|=p^n$.
\item $|P|=2$ and $C_T(P)=P$. Then $|P|=p$ and $G$ is cyclic of order $2p$.
\item $|P|=4$ and $|C_T(P)|=2$. Then $G=C_2\times D_p=D_{2p}$.
\end{itemize}

The final step is to show that the groups $G$ in (e) have the property that
$P(G)$ is threshold and split, that is, contains no induced $P_4$, $C_4$, 
$2K_2$ or $C_5$. Here $C_5$ is excluded since $P(G)$ is perfect
(Corollary~\ref{c:ppg}). We note also that, if $H$ is a graph having no
isolated vertex and no vertex joined to all others, and $P(C_n)$ is $H$-free,
then also $P(D_n)$ is $H$-free. For $P(D_n)$ consists of $P(C_n)$ with $n$
further vertices joioned only to the identity (which is joined to all vertices
of $P(C_n)$), so any induced copy of $H$ would have to be contained in
$P(C_n)\setminus\{1\}$.

Thus we only have to deal with cyclic groups. Now if $G$ is cyclic of prime
power order, then $P(G)$ is complete; and if $G$ is cyclic of order $2p$,
then the complement of $P(G)$ is a star (on the elements of orders $2$ and $p$)
together with $p$ isolated vertices (the identity and the generators). All
these graphs exclude $P_4$, $C_4$ and $2K_2$ (a self-complementary collection
of graphs).
\end{proof}

\subsection*{Acknowledgements}
The author Pallabi Manna is supported by CSIR (Grant No-09/983(0037)/2019-EMR-I). Ranjit Mehatari thanks the SERB, India,
for financial support (File Number: CRG/2020/000447) through the Core
Research Grant.

\end{document}